\newtheorem{thm}{Theorem}[section]
\newtheorem{prop}[thm]{Proposition}
\newtheorem{lem}[thm]{Lemma}
\theoremstyle{definition}
\newtheorem{defn}[thm]{Definition}
\theoremstyle{remark}
\newtheorem{rem}[thm]{Remark}
\newcommand{\thmref}[1]{Theorem~\textup{\ref{#1}}}
\newcommand{\secref}[1]{Section~\textup{\ref{#1}}}
\newcommand{\lemref}[1]{Lemma~\textup{\ref{#1}}}
\newcommand{\defnref}[1]{Definition~\textup{\ref{#1}}}
\numberwithin{equation}{section}
\renewcommand{\AA}{\mathcal A}
\newcommand{\CC}{\mathcal C}
\newcommand{\wilde}{\widetilde}
\newcommand{\xt}{\otimes}
\newcommand{\what}{\widehat}
\newcommand{\id}{\text{\textup{id}}}
\newcommand{\variso}{\xrightarrow{\simeq}}
\newcommand{\midtext}[1]{\quad\text{#1}\quad}
\newcommand{\righttext}[1]{\qquad\text{#1 }}
\newcommand{\cstg}{\ensuremath{C^*(G)}}
\newcommand{\dg}{\ensuremath{\delta_G}}
\newcommand{\rt}{\textup{rt}}
\newcommand{\ad}{\operatorname{Ad}}
\renewcommand{\bar}{\overline}
\newcommand{\spn}{\operatorname{span}}
\newcommand{\clspn}{\bar{\spn}}
\newcommand{\inv}{^{-1}}
\renewcommand{\subset}{\subseteq}
\renewcommand{\epsilon}{\varepsilon}
\definecolor{alizarin}{rgb}{0.82, 0.1, 0.26}
\definecolor{blue-violet}{rgb}{0.54, 0.17, 0.89}
\newcommand{\coo}{\mathcal{C}}
\newcommand{\waco}{\mathcal{A}}
\newcommand{\wcpo}{\operatorname{CP}}
\DeclareMathOperator{\mor}{Mor}
\begin{document}

\title{Strong Pedersen rigidity for coactions of compact groups}
\begin{abstract}
We prove a version of Pedersen's outer conjugacy theorem for coactions of compact groups, which characterizes outer conjugate coactions of a compact group in terms of properties of the dual actions.
In fact, we show that every isomorphism of a dual action comes from a unique outer conjugacy of a coaction, which in this context should be called \emph{strong Pedersen rigidity}.
We promote this to a category equivalence.
\end{abstract}

\author[Kaliszewski]{S. Kaliszewski}
\address{School of Mathematical and Statistical Sciences, Arizona State University, Tempe, AZ 85287}
\email{kaliszewski@asu.edu}
\author[Omland]{Tron Omland}
\address{Norwegian National Security Authority (NSM);
Department of Mathematics\\University of Oslo\\Norway}
\email{tron.omland@gmail.com}
\author[Quigg]{John Quigg}
\address{School of Mathematical and Statistical Sciences, Arizona State University, Tempe, AZ 85287}
\email{quigg@asu.edu}
\author[Turk]{Jonathan Turk}
\address{School of Mathematical and Statistical Sciences, Arizona State University, Tempe, AZ 85287}
\email{jturk@asu.edu}

\date{September 28, 2023}

\subjclass[2000]{46L05, 46L55}
 \keywords{
action,
coaction,
generalized fized-point algebra,
outer conjugate}

\maketitle

\section{Introduction}

Given an action $\alpha$ of a locally compact group $G$ on a $C^*$-algebra $A$,
one forms the crossed product $C^*$-algebra $A \rtimes_\alpha G$.
The question is then how much the crossed product remembers of the underlying dynamics,
assuming knowledge of the group $G$.
In general, if we only know the crossed product up to isomorphism,
we cannot say much about the underlying dynamics,
not even up to Morita equivalence.
Thus, we need some extra information about the crossed product,
chosen with an eye toward recovering the action up to some type of equivalence,
and this concept is often thought of as the rigidity of the dynamical system.

Pedersen \cite[Theorem~35]{pedersen} proved that two actions $(A,\alpha)$ and $(B,\beta)$ of a locally compact abelian group $G$ 
are outer conjugate
(he used the term ``exterior equivalent'', which is inconsistent with current use)
if and only if there is an isomorphism between the crossed products that is equivariant for the dual actions
$\what\alpha$ and $\what\beta$
and respects the embeddings of $A$ and $B$.
We call the assumption on respecting the embeddings \emph{Pedersen's condition}.

In \cite[Theorem~3.1]{three}, the first three authors (to whom we refer here as ``KOQ'')
extended Pedersen's theorem to nonabelian groups,
using dual coactions,
and in \cite[Theorem~5.9]{three}
gave a categorical context for Pedersen's theorem,
establishing an equivalence between an \emph{outer category} of actions
and a \emph{fixed-point equivariant category} of coactions.

KOQ ran into difficulty when attempting to find examples
showing that Pedersen's condition on the embeddings is necessary.
This would mean finding two actions of $G$ that are not outer conjugate but nevertheless have isomorphic dual coactions.
Not only were no counterexamples found, KOQ proved that no such examples exist under a variety of circumstances, for instance (just to give a representative sampling) $G$ discrete, $A$ and $B$ stable or commutative, or $\alpha$ and $\beta$ inner.
KOQ referred to such results as ``no-go theorems''.
More accurately, in \cite{rigid} the aforementioned no-go theorems were proved for abelian groups,
and in \cite{rigid2} this extra assumption was dropped.

KOQ called situations in which Pedersen's condition is redundant \emph{Pedersen rigid} (for actions) and \emph{fixed-point rigid} (for the equivariant coactions). And this is implied by formally stronger properties called strongly Pedersen rigid and strongly fixed-point rigid, respectively.
More precisely, a coaction $(A,\delta)$
was called strongly fixed-point rigid
if it has a unique generalized fixed-point algebra, independent of the choice of $\delta_G-\delta$ equivariant unitary homomorphism $V:\cstg\to M(A)$,
and an action $(A,\alpha)$ was called strongly Pedersen rigid if its dual coaction is strongly fixed-point rigid.
We now regard these definitions to be flawed.
See the end of \secref{prelim} for corrected versions.

In view of the failure to find counterexamples, there still remains what KOQ called the \emph{strong Pedersen rigidity problem}:
Is every action of $G$ strongly Pedersen rigid?
Equivalently, is every equivariant coaction of $G$ strongly fixed-point rigid?
(There were also slightly weaker versions, without the adjective ``strong''.)

In this paper we begin the investigation of the dual problem,
involving coactions and their dual actions.
The first roadblock is that we need a fully working version of Pedersen's theorem,
which can be formulated by routinely dualizing everything,
and would say:
``two coactions $(A,\delta)$ and $(B,\epsilon)$ of $G$ are outer conjugate
if and only if there is an isomorphism between the crossed products
that is equivariant for the dual actions $\what\delta$ and $\what\epsilon$
and respects the embeddings of $A$ and $B$.''
As KOQ point out in \cite{three}, one direction of this desired theorem
is essentially recorded in \cite[Proposition~2.8]{twisted},
where it was phrased for exterior equivalence, but a routine 
modification shows that
if the coactions are outer conjugate then there is an isomorphism of the dual actions that preserves the embeddings.
But in the general case the converse is still open.
In the present paper, specializing to compact groups, we prove the converse direction of this Pedersen's theorem,
which ultimately allows us to prove a no-go theorem
and a category equivalence.

The outer category of actions has also been considered in \cite[Section~1.3]{szabo}, where it is called the cocycle category,
and where isomorphisms are called cocycle conjugacies.
Pedersen rigidity for actions of discrete groups on commutative $C^*$-algebras was treated with a different approach,
using groupoids, and just called $C^*$-rigidity in \cite[Remark~7.6]{CRST}.

We thank the anonymous referee for comments that greatly improved our paper.

\section{Preliminaries}\label{prelim}

We begin by recalling some terminology from \cite{rigid, rigid2}.
However, although in \cite{rigid2} $G$ was allowed to be any locally compact group,
here we require at least that $G$ be amenable ---
the main results of this paper will actually have $G$ compact,
and the formalism from \cite{rigid2} is more clearly stated for amenable groups
(so that, for example, all coactions are both maximal and normal).

Let $(A,\delta)$ be a coaction of $G$.
The \emph{crossed product} is given by the triple $(A\rtimes_\delta G,\what\delta,j_G)$,
where $j_G$ is part of the universal covariant homomorphism $(j_A,j_G)$ of $(A,C_0(G))$ into $M(A\rtimes_\delta G)$ and $\what\delta$ is the \emph{dual action} on $A\rtimes_\delta G$ given by
$\what\delta_s=j_A\times (j_G\circ\rt_s)$,
where $\rt$ is the action by right translation of $G$ on $C_0(G)$.

Given two coactions $(A,\delta)$ and $(B,\epsilon)$,
a $\delta-\epsilon$ equivariant
nondegenerate homomorphism $\phi:A\to M(B)$
induces a $\what\delta-\what\epsilon$ equivariant nondegenerate homomorphism
$\phi\rtimes G:A\rtimes_\delta G\to M(B\rtimes_\epsilon G)$,
and $(A,\delta)$ and $(B,\epsilon)$ are \emph{isomorphic} if there is
a $\delta-\epsilon$ equivariant isomorphism $A\variso B$.

An \emph{equivariant action} is a triple $(A,\alpha,\mu)$, where
$(A,\alpha)$ is an action and $\mu:C_0(G)\to M(A)$ is a nondegenerate $\rt-\alpha$ equivariant homomorphism.
Landstad duality for coactions \cite[Theorem~3.3 and Proposition~3.2]{ldc}
says that there is a coaction $(B,\delta)$,
unique up to isomorphism,
and a $\what\delta-\alpha$ equivariant isomorphism $\Theta:B\rtimes_\delta G\variso A$
such that $\Theta\circ j_G=\mu$.
Consequently, the subalgebra $j_B(B)$ of $M(A)$ depends only upon $\alpha$ and $\mu$,
and is called the \emph{generalized fixed-point algebra}, which we denote by $A^{\alpha,\mu}$.
Since $G$ is amenable, the homomorphism $j_B$ is an isomorphism of $B$ onto $A^{\alpha,\mu}$.
If we use this to identify $B$ with the generalized fixed-point algebra then $\delta$ becomes
the restriction to $B$ of (the canonical extension to $M(A)$ of)
the inner coaction $\delta^\mu$ of $G$ on $A$ implemented by $\mu$
(and then of course $j_G$ becomes $\mu$).
Again since we are assuming $G$ to be amenable here,
\cite[Theorem~4.3]{ldc} says that $A^{\alpha,\mu}$ coincides with the set of all $a\in M(A)$
satisfying
\emph{Landstad's coconditions} \cite[Definition~4.1]{ldc}:
\begin{enumerate}[label=(L\arabic*), nosep]
\item\label{L1}
$\alpha_s(a)=a$ for all $s\in G$;
\item\label{L2}
$a\mu(C_0(G))\cup\mu(C_0(G))a\subset A$;
\item\label{L3}
$\delta^\mu(a)\in \wilde M(M(A)\xt C^*(G))$.
\end{enumerate}
The right-hand side of \ref{L3}
refers to the multipliers $m$ of $M(A)\xt\cstg$
such that $m(1\xt\cstg)\cup (1\xt\cstg)m\subset M(A)\xt\cstg$.

Dually, we denote the crossed product of an action $(A,\alpha)$ by
$(A\rtimes_\alpha G,\what\alpha,i_G)$, where $i_G$ is part of the universal covariant homomorphism $(i_A,i_G)$ of $(A,C^*(G))$, and $\what\alpha$ is the dual coaction on $A\rtimes_\alpha G$ determined by
\begin{align*}
\what\alpha\circ i_A(a)&=i_A(a)\xt 1\righttext{for}a\in A\\
\what\alpha\circ i_G(s)&=i_G(s)\xt s\righttext{for}s\in G.
\end{align*}
If $(B,\beta)$ is another action, an $\alpha-\beta$ equivariant nondegenerate homomorphism $\phi:A\to  M(B)$ induces an $\what\alpha-\what\beta$ equivariant nondegenerate homomorphism $\phi\rtimes G:A\rtimes_\alpha G\to M(B\rtimes_\beta G)$,
and $(A,\alpha)$ and $(B,\beta)$ are \emph{isomorphic} if there is an
$\alpha-\beta$ equivariant isomorphism $A\simeq B$.
An \emph{equivariant coaction} is a triple $(A,\delta,U)$, where $(A,\delta)$ is a coaction and
$U:G\to M(A)$ is a strictly continuous unitary homomorphism
that is $G$-equivariant in the sense that
$\delta(U_s)=U_s\xt s$ for $s\in G$.
Landstad duality for coactions
says that there is an action $(B,\alpha)$, unique up to isomorphism,
and an isomorphism $\Theta:(B\rtimes_\alpha G,\what\alpha)\variso (A,\delta)$
such that $\Theta\circ i_G=U$.
Consequently, the subalgebra $i_B(B)$ of $M(A)$ depends only upon $\delta$ and $U$, and is called the \emph{generalized fixed-point algebra}, which we denote by $A^{\delta,U}$.

Given a coaction $(A,\delta)$,
a \emph{$\delta$-cocycle} is a unitary  $U\in M(A\xt C^*(G))$
such that
\begin{enumerate}[label=(\arabic*), nosep]
\item
$(\id\xt\delta_G)(U)=(U\xt 1)(\delta\circ\id)(U)$ and
\item
$\ad U\circ\delta(A)(1\xt C^*(G))\subset A\xt C^*(G)$.
\end{enumerate}
The \emph{perturbed coaction} is $\ad U\circ\delta$,
also called a \emph{perturbation} of $\delta$,
and is 
by definition \emph{exterior equivalent} to $\delta$.
Then $(A,\delta)$ is \emph{outer conjugate} to another coaction $(B,\epsilon)$ if it is isomorphic to a perturbation of $(B,\epsilon)$.

Dually, given an action $(A,\alpha)$, an \emph{$\alpha$-cocycle} is a
strictly continuous unitary map $u:G\to M(A)$ such that
$u_{st}=u_s\alpha_s(u_t)$ for $s,t\in G$.
The \emph{perturbed action} is $(\ad u\circ\alpha)_s=\ad u_s\circ\alpha_s$,
also called a \emph{perturbation} of $\alpha$, and is by definition 
\emph{exterior equivalent} fo $\alpha$.
Then $(A,\alpha)$ is \emph{outer conjugate} to another action $(B,\beta)$ if it is isomorphic to a perturbation of $(B,\beta)$.

An action $(A,\alpha)$ is \emph{Pedersen rigid} if for every action $(B,\beta)$,
$\alpha$ is outer conjugate to $\beta$ if and only if
the dual coactions $\what\alpha$ and $\what\beta$ are isomorphic.
Dually, a coaction $(A,\delta)$ is \emph{Pedersen rigid} if for every coaction $(B,\epsilon)$,
$\delta$ is outer conjugate to $\epsilon$ if and only if
the dual actions $\what\delta$ and $\what\epsilon$ are isomorphic.

There are ``strong'' versions:
an action $(A,\alpha)$ is \emph{strongly Pedersen rigid} if for every action $(B,\beta)$,
if $\Theta:(A\rtimes_\alpha G,\what\alpha)\variso (B\rtimes_\beta G,\what\beta)$
is an isomorphism, then $\Theta(i_A(A))=i_B(B)$,
and dually
a coaction $(A,\delta)$ is \emph{strongly Pedersen rigid} if for every coaction $(B,\beta)$,
if $\Theta:(A\rtimes_\delta G,\what\delta)\variso (B\rtimes_\epsilon G,\what\epsilon)$
is an isomorphism, then $\Theta(j_A(A))=j_B(B)$.

The \emph{Pedersen rigidity problem} for actions is whether every action is Pedersen rigid,
and similarly for the \emph{strong Pedersen rigidity problem};
in this paper we are interested in the corresponding problems for coactions,
and we give an affirmative answer when $G$ is compact
(\thmref{no-go}).

Pedersen's theorem for actions says that two actions $\alpha$ and $\beta$ of $G$ on $A$ are
exterior equivalent if and only if
there is an isomorphism
$\Theta:(A\rtimes_\alpha G,\what\alpha)\variso (A\rtimes_\beta G,\what\beta)$
such that
\[
\Theta\circ j_A^\alpha=j_A^\beta,
\]
and actions $(A,\alpha)$ and $(B,\beta)$ of $G$ are
outer conjugate if and only if
there is an isomorphism
$\Theta:(A\rtimes_\alpha G,\what\alpha)\variso (B\rtimes_\beta G,\what\beta)$
such that $\Theta(i_A(A))=i_B(B)$.
There is at present no dual version (with actions replaced by coactions)
for arbitrary $G$,
but in this paper we will prove a dual version
(\thmref{pedersen})
under the assumption that $G$ is compact.

Pedersen rigidity (and its strong counterpart) for actions can be equivalently expressed in terms of the dual coactions, using Landstad duality and Pedersen's theorem.

The following definitions have been modified and augmented from \cite{rigid,rigid2}:

\begin{defn}
\begin{enumerate}
\item
A coaction $(A,\delta)$ is \emph{dual} if
there exists an action $(B,\alpha)$ such that $(A,\delta)\simeq (B\rtimes_\alpha G,\what\alpha)$.

\item
A dual coaction $(A,\delta)$ is
\emph{strongly fixed-point rigid} if
the generalized fixed-point algebra $A^{\delta,V}$ is independent of the choice of equivariant
strictly continuous unitary homomorphism $V:G\to M(A)$,
i.e., for any two such $V,W:G\to M(A)$ we have
$A^{\delta,V}=A^{\delta,W}$.

\item
A dual coaction $(A,\delta)$ is
\emph{fixed-point rigid} if
for any two $G$-equivariant strictly continuous unitary homomorphisms $V,W:G\to M(A)$
there is an automorphism $\Theta$ of $(A,\delta)$ such that
$\Theta(A^{\delta,V})=A^{\delta,W}$.
\end{enumerate}
\end{defn}

Note that dual coactions are automatically maximal.
Also, by Landstad duality a coaction $(A,\delta)$ is dual if (and only if)
there exists a $G$-equivariant strictly continuous unitary
homomorphism $V:G\to M(A)$.

\begin{rem}
In \cite{rigid2} we defined fixed-point rigidity (strong or not) without worrying about whether there exists any such $V$. We now regard this to be flawed: if no such $V$ exists, then the coaction would be called strongly fixed-point rigid by default.
The above notion of dual coaction is intended to rectify this.
Although we do not need it here, we point out that we would now make a similar change to the definition of \emph{fixed-point rigid actions} (strong or not), restricting it to \emph{dual actions}
(defined below).
\end{rem}

\begin{defn}\label{dual}
An action $(C,\alpha)$ is \emph{dual} if there exists a coaction $(A,\delta)$ such that $(A\rtimes_\delta G,\what\delta)\simeq (C,\alpha)$.
\end{defn}

\begin{rem}
By Landstad duality and Pedersen's theorem,
actions are Pedersen rigid if and only if their dual coactions are fixed-point rigid,
and similarly for the strong versions.
A dual version, namely that coactions are Pedersen rigid if and only if their dual actions are fixed-point rigid,
and similarly for the strong versions, would require a fully working version of Pedersen's theorem for coactions\footnote{although Landstad duality is no problem}.
By our \thmref{pedersen}, at present we can only deduce this equivalence when $G$ is compact.
\end{rem}

\section{Reduction of coconditions for compact groups}\label{reduction}

The following lemma shows that, when $G$ is compact, Landstad's coconditions can be greatly simplified.

\begin{lem}\label{fixed}
Let $(A,\alpha,\mu)$ be an equivariant action.
When $G$ is compact,
Landstad's coconditions \ref{L2}--\ref{L3}
are redundant,
so that $A^{\alpha,\mu}$ coincides with the fixed-point algebra $A^\alpha$.
\end{lem}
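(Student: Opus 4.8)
The plan is to prove the stronger statement that, for compact $G$, cocondition \ref{L3} is satisfied by \emph{every} $a\in M(A)$, while cocondition \ref{L2} is equivalent to the single requirement $a\in A$; together with \ref{L1} (which is literally the fixed-point condition and needs no argument) this identifies $A^{\alpha,\mu}$ with $A^\alpha=\{a\in A:\alpha_s(a)=a\text{ for all }s\in G\}$. First I would record the two structural consequences of compactness that drive everything. Since $G$ is compact, $C_0(G)=C(G)$ is unital, and nondegeneracy of $\mu$ forces $\mu(1)=1_{M(A)}$. Moreover $C^*(G)$ decomposes as the $c_0$-direct sum $c_0\text{-}\bigoplus_{\pi\in\widehat G}M_{d_\pi}$ of full matrix algebras indexed by the discrete dual $\widehat G$; consequently $M(A)\otimes C^*(G)=c_0\text{-}\bigoplus_\pi M_{d_\pi}(M(A))$, whereas the larger algebra $M(A\otimes C^*(G))=\prod_\pi M_{d_\pi}(M(A))$ is the full product of bounded families.

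For \ref{L2} the argument is immediate in both directions. If $a\in A$, then since $A$ is an ideal in $M(A)$ and $\mu(C(G))\subseteq M(A)$, both $a\mu(C(G))$ and $\mu(C(G))a$ lie in $A$, so \ref{L2} holds. Conversely, evaluating \ref{L2} at the unit $1\in C(G)$ gives $a=a\mu(1)\in a\mu(C(G))\subseteq A$. Hence \ref{L2} holds if and only if $a\in A$.

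For \ref{L3} I would argue that the multiplier condition is vacuous when $G$ is compact. Being the (inner) coaction $\delta^\mu$ extended to $M(A)$, the map sends $a\in M(A)$ to an element $\delta^\mu(a)=(m^\pi)_\pi\in M(A\otimes C^*(G))=\prod_\pi M_{d_\pi}(M(A))$, a bounded family with $\sup_\pi\|m^\pi\|=\|\delta^\mu(a)\|<\infty$. For any $c=(c^\pi)_\pi\in C^*(G)$ we have $\|c^\pi\|\to 0$, so the components $m^\pi c^\pi$ of $\delta^\mu(a)(1\otimes c)$ satisfy $\|m^\pi c^\pi\|\le(\sup_\pi\|m^\pi\|)\,\|c^\pi\|\to 0$; thus $\delta^\mu(a)(1\otimes c)\in c_0\text{-}\bigoplus_\pi M_{d_\pi}(M(A))=M(A)\otimes C^*(G)$, and symmetrically $(1\otimes c)\delta^\mu(a)\in M(A)\otimes C^*(G)$. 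Therefore $\delta^\mu(a)\in\widetilde M(M(A)\otimes C^*(G))$ for every $a\in M(A)$, so \ref{L3} is automatic. Combining the three observations, an element of $M(A)$ satisfies \ref{L1}--\ref{L3} exactly when it lies in $A$ and is fixed by $\alpha$, which gives $A^{\alpha,\mu}=A^\alpha$.

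The main obstacle is \ref{L3}: this is where all the content sits, since \ref{L1} is trivial and \ref{L2} is a one-line ideal computation. The crucial input is the $c_0$-block structure of $C^*(G)$, which forces multiplication by $1\otimes c$ to turn any norm-bounded family into one vanishing at infinity; this is precisely the place where compactness enters, through discreteness of $\widehat G$ and finite-dimensionality of each $\pi$. The only point requiring a moment's care is confirming that $\delta^\mu(a)$ genuinely lands in $M(A\otimes C^*(G))$, which is immediate from $\delta^\mu$ being an inner coaction and hence a homomorphism into the multiplier algebra.
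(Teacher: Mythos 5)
Your proof is correct, and while it rests on the same scaffolding as the paper's --- the characterization, quoted in \secref{prelim} from \cite{ldc}, of $A^{\alpha,\mu}$ as the set of $a\in M(A)$ satisfying \ref{L1}--\ref{L3} --- it disposes of the two coconditions by a genuinely different route. The paper deduces $A^{\alpha,\mu}\subset A$ from the Landstad-duality density $A=\clspn\{A^{\alpha,\mu}\mu(C(G))\}$ together with unitality of $C(G)$, whereas you get it in one line from \ref{L2} itself, evaluated at $\mu(1)=1$. More substantially, the paper verifies \ref{L3} only for $a\in A^\alpha$: there $\delta^\mu(a)(1\otimes c)$ lies in $A\otimes C^*(G)$ by coaction nondegeneracy, and being fixed by $\alpha_s\otimes\id$ it lies in $(A\otimes C^*(G))^{\alpha\otimes\id}=A^\alpha\otimes C^*(G)\subset M(A)\otimes C^*(G)$, where compactness enters through the averaging argument behind that fixed-point identity. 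You instead prove the stronger statement that \ref{L3} holds for \emph{every} $a\in M(A)$, via Peter--Weyl: $C^*(G)$ is a $c_0$-direct sum of matrix blocks, so $M(A\otimes C^*(G))$ is the bounded product $\prod_\pi M_{d_\pi}(M(A))$ while $M(A)\otimes C^*(G)$ is the corresponding $c_0$-sum, and a bounded family times $1\otimes c$ (whose blocks vanish at infinity) again vanishes at infinity. Your route isolates exactly where compactness acts (discreteness of $\widehat G$ and finite-dimensionality of its blocks), shows that \ref{L3} is literally vacuous rather than merely a consequence of \ref{L1}, and makes transparent that it is \ref{L2} alone that cuts $M(A)$ down to $A$; as a small bonus, the same block estimate shows $\delta^\mu(a)$ multiplies all of $M(A)\otimes C^*(G)$ into itself, which is formally part of the definition of $\wilde M(M(A)\otimes C^*(G))$. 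What the paper's argument buys in exchange is economy of means: it invokes no representation theory of $G$, only tools already in play (Landstad duality, equivariance of $\delta^\mu$, and the elementary fixed-point identity for compact group actions on tensor products), and its bookkeeping stays entirely within the algebras $A\otimes C^*(G)$ and $M(A)\otimes C^*(G)$ that appear in the coconditions themselves. The only points in your write-up needing a pedantic gloss are that ``$\|c^\pi\|\to 0$'' should be read as vanishing at infinity over the (possibly uncountable) discrete set $\widehat G$, and that the identification of $M(A)\otimes C^*(G)$ with the $c_0$-sum inside the bounded product must be checked to be the canonical one --- both routine.
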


\begin{proof}
First note that, by Landstad duality, we have
\[
A=\clspn\{A^{\alpha,\mu}\mu(C(G))\},
\]
and because $C(G)$ is unital we must have $A^{\alpha,\mu}\subset A$.
Thus Landstad's cocondition~\ref{L1}
implies that $A^{\alpha,\mu}\subset A^\alpha$,
and for the opposite containment
it suffices to show that if $a\in A^\alpha$ then $a$ satisfies \ref{L2}--\ref{L3}.
Since 
$\mu(C(G))\subset M(A)$
and $a\in A$,
\ref{L2}
is trivially satisfied.

For the other part, the homomorphism $\mu$ implements an inner coaction $\delta$ of $G$ on $A$, so
if $c\in C^*(G)$ then,
because $a\in A$,
we have $\delta(a)(1\xt c)\in A\xt C^*(G)$.
Since $C^*(G)$ is nuclear (because $G$ is amenable) the tensor product $A^\alpha\xt C^*(G)$ is canonically identified with a $C^*$-subalgebra of $A\xt C^*(G)$.
In fact, we have
\[
A^\alpha\xt C^*(G)=\bigl(A\xt C^*(G)\bigr)^{\alpha\xt\id}.
\]
Since $a\in A^\alpha$,
for all $s\in G$
we have
\begin{align*}
(\alpha_s\xt\id)\bigl(\delta(a)(1\xt c)\bigr)
&=\delta\bigl(\alpha_s(a)\bigr)(1\xt c)
\\&=\delta(a)(1\xt c),
\end{align*}
so
\[
\delta(a)(1\xt c)\in A^\alpha\xt C^*(G).
\]
Similarly for $(1\xt c)\delta(a)$,
and we have verified \ref{L3}.
\end{proof}

We apply \lemref{fixed} to prove our no-go theorem:
\begin{thm}\label{no-go}
If $G$ is compact, then every coaction of $G$ is strongly Pedersen rigid.
\end{thm}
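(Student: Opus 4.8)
The plan is to reduce everything to \lemref{fixed} together with the elementary fact that an equivariant isomorphism carries fixed-point algebras onto fixed-point algebras. So fix a coaction $(B,\epsilon)$ and an isomorphism $\Theta\colon (A\rtimes_\delta G,\what\delta)\variso (B\rtimes_\epsilon G,\what\epsilon)$; the goal is to show $\Theta(j_A(A))=j_B(B)$.

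First I would observe that $(A\rtimes_\delta G,\what\delta,j_G)$ is an equivariant action in the sense of \secref{prelim}: by the formula $\what\delta_s=j_A\times(j_G\circ\rt_s)$, the embedding $j_G\colon C_0(G)\to M(A\rtimes_\delta G)$ is $\rt$-$\what\delta$ equivariant. Applying Landstad duality for coactions to this equivariant action recovers $(A,\delta)$ and identifies the generalized fixed-point algebra with $j_A(A)$; that is,
\[
j_A(A)=(A\rtimes_\delta G)^{\what\delta,j_G}.
\]
The crucial step is then to invoke \lemref{fixed}: since $G$ is compact, the generalized fixed-point algebra collapses to the ordinary fixed-point algebra, so
\[
j_A(A)=(A\rtimes_\delta G)^{\what\delta}.
\]
The whole point is that the right-hand side depends only on the dual action $\what\delta$ and no longer on the auxiliary embedding $j_G$. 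Running the same argument for $(B,\epsilon)$ gives $j_B(B)=(B\rtimes_\epsilon G)^{\what\epsilon}$.

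Finally, since $\Theta$ is $\what\delta$-$\what\epsilon$ equivariant, for any $x\in A\rtimes_\delta G$ with $\what\delta_s(x)=x$ for all $s\in G$ I would compute $\what\epsilon_s(\Theta(x))=\Theta(\what\delta_s(x))=\Theta(x)$, so $\Theta$ maps $(A\rtimes_\delta G)^{\what\delta}$ into $(B\rtimes_\epsilon G)^{\what\epsilon}$; applying the same reasoning to $\Theta\inv$ yields equality. Combining the three displays gives $\Theta(j_A(A))=j_B(B)$, which is exactly strong Pedersen rigidity.

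I expect the only real obstacle to be conceptual rather than computational: a priori the generalized fixed-point algebra $(A\rtimes_\delta G)^{\what\delta,j_G}$ depends on the embedding $j_G$ (equivalently, on a choice of equivariant unitary homomorphism), and an equivariant $\Theta$ need not respect the two embeddings --- respecting them is precisely Pedersen's condition, which is exactly what we wish to render redundant. \lemref{fixed} dissolves this difficulty by erasing the dependence on $j_G$ when $G$ is compact, after which bare equivariance of $\Theta$ suffices. The one bookkeeping point to double-check is that, for compact $G$, the algebra $j_A(A)$ genuinely sits inside $A\rtimes_\delta G$ rather than merely its multiplier algebra, as established within the proof of \lemref{fixed}; this is what allows $\Theta$ to act on it directly.
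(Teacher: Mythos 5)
Your proof is correct and takes essentially the same route as the paper: the paper's own (very terse) proof also reduces via Landstad duality to showing that every equivariant action of a compact $G$ is strongly fixed-point rigid, which is exactly \lemref{fixed}, and the equivariance argument you spell out is precisely what the paper leaves implicit (and records explicitly in the remark following \thmref{pedersen}). No gaps.
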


\begin{proof}
By Landstad duality, it suffices to show that every equivariant action of $G$ is strongly fixed-point rigid.
But this follows immediately from \lemref{fixed}
\end{proof}

\section{Pedersen's theorem for coactions of compact groups}\label{sec:pedersen}

Recall that $G$ denotes a compact group.

\begin{thm}\label{pedersen}
\begin{enumerate}[label=\textup{(\arabic*)}]
\item
Two coactions $\delta$ and $\epsilon$ of $G$ on $A$ are
exterior equivalent if and only if
there is an isomorphism
$\Theta:(A\rtimes_\delta G,\what\delta)\variso (A\rtimes_\epsilon G,\what\epsilon)$
such that
\begin{equation}\label{theta iso}
\Theta\circ j_A^\delta=j_A^\epsilon.
\end{equation}
In fact, if $U$ is an $\epsilon$-cocycle such that $\delta=\ad U\circ\epsilon$,
then there is an isomorphism
$\Phi_U:(A\rtimes_\delta G,\what\delta)\variso (A\rtimes_\epsilon G,\what\epsilon)$
satisfying \eqref{theta iso} such that
\begin{equation}\label{theta G}
(\Phi_U\circ j_G^\delta\xt\id)(w_G)=(j_A^\epsilon\xt\id)(U)(j_G^\epsilon\xt\id)(w_G),
\end{equation}
and conversely every isomorphism $\Theta$ satisfying Equations\ref{theta iso} and \ref{theta G}
is of the form $\Phi_U$ for a unique $\epsilon$-cocycle $U$.

\item
Two coactions $(A,\delta)$ and $(B,\epsilon)$ of $G$ are
outer conjugate if and only if
there is an isomorphism
$\Theta:(A\rtimes_\delta G,\what\delta)\variso (B\rtimes_\epsilon G,\what\epsilon)$.
\end{enumerate}
\end{thm}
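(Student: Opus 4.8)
The plan is to prove part (1) first, for the two coactions $\delta,\epsilon$ on a common algebra $A$, and then to deduce part (2) by combining part (1) with Landstad duality and the no-go theorem \thmref{no-go}. The heart of part (1) is an explicit construction: from an $\epsilon$-cocycle $U$ with $\delta=\ad U\circ\epsilon$ I would build the isomorphism $\Phi_U$, and conversely, from any equivariant $\Theta$ satisfying \eqref{theta iso} I would extract a cocycle. Part (2) then follows once one observes that the compactness of $G$ forces every equivariant isomorphism to respect the embeddings automatically, so that Pedersen's condition --- the obstruction that keeps the general case open --- becomes vacuous.

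For the forward direction of (1), I would define $\Phi_U$ on the generators of $A\rtimes_\delta G$ by declaring $\Phi_U\circ j_A^\delta=j_A^\epsilon$ and by twisting the $C_0(G)$-part according to \eqref{theta G}, using the cocycle $U$ and the canonical unitary $w_G\in M(C_0(G)\xt\cstg)$. The work is to check that this assignment respects the covariance relations defining $A\rtimes_\delta G$, so that it extends to a homomorphism via the universal property; that it intertwines the dual actions $\what\delta$ and $\what\epsilon$, where the cocycle identity $(\id\xt\dg)(U)=(U\xt1)(\delta\xt\id)(U)$ is exactly what is needed; and that it is invertible, with inverse built from $U^*$ viewed as a $\delta$-cocycle implementing $\epsilon=\ad U^*\circ\delta$. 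Equations \eqref{theta iso} and \eqref{theta G} then hold by construction.

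For the converse of (1), given an equivariant isomorphism $\Theta$ with $\Theta\circ j_A^\delta=j_A^\epsilon$, both $\Theta\circ j_G^\delta$ and $j_G^\epsilon$ are $\rt$--$\what\epsilon$ equivariant nondegenerate homomorphisms of $C_0(G)$, so I would use \eqref{theta G} to \emph{define} a unitary $U\in M(A\xt\cstg)$ and then verify the two $\epsilon$-cocycle identities together with $\delta=\ad U\circ\epsilon$; these reduce to the covariance relations for the two crossed products and the equivariance of $\Theta$. Uniqueness of $U$ follows since $j_A^\epsilon$ is injective and $w_G$ generates enough of $M(C_0(G)\xt\cstg)$ to pin $U$ down. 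I expect this extraction-and-verification step to be the most delicate computationally, as it requires pushing the comultiplication $\dg$ across tensor legs to recognize the cocycle conditions.

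Finally, for part (2) the ``only if'' direction is formal: an isomorphism $(A,\delta)\simeq(B,\ad U\circ\epsilon)$ induces, by functoriality, an equivariant isomorphism of the crossed products, which I compose with the $\Phi_U$ supplied by part (1). For ``if'', suppose $\Theta\colon(A\rtimes_\delta G,\what\delta)\variso(B\rtimes_\epsilon G,\what\epsilon)$ is equivariant. Viewing $(A\rtimes_\delta G,\what\delta,j_G^\delta)$ and $(B\rtimes_\epsilon G,\what\epsilon,j_G^\epsilon)$ as equivariant actions, \lemref{fixed} identifies their generalized fixed-point algebras $j_A^\delta(A)$ and $j_B^\epsilon(B)$ with the ordinary fixed-point algebras of $\what\delta$ and $\what\epsilon$; since $\Theta$ is equivariant it carries one onto the other, giving $\Theta(j_A^\delta(A))=j_B^\epsilon(B)$ with no hypothesis on the embeddings. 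I would then set $\theta_0\colon A\to B$ by $j_B^\epsilon\circ\theta_0=\Theta\circ j_A^\delta$, let $\epsilon_0$ be the coaction on $A$ obtained by transporting $\epsilon$ along $\theta_0$, and check that $(\theta_0\rtimes G)^{-1}\circ\Theta$ is an equivariant isomorphism $(A\rtimes_\delta G,\what\delta)\variso(A\rtimes_{\epsilon_0}G,\what{\epsilon_0})$ satisfying \eqref{theta iso} for the pair $(\delta,\epsilon_0)$. Part (1) then makes $\delta$ and $\epsilon_0$ exterior equivalent, so $\delta$ is outer conjugate to $\epsilon_0\simeq\epsilon$. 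The conceptual linchpin is the passage $\Theta(j_A^\delta(A))=j_B^\epsilon(B)$, and it is precisely here --- through \lemref{fixed} --- that compactness of $G$ does the essential work.
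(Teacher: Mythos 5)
Your proposal has a genuine gap, and it sits at the exact point that makes \thmref{pedersen} a theorem rather than a routine dualization. In the converse half of part (1) you propose to ``use \eqref{theta G} to \emph{define} a unitary $U\in M(A\xt\cstg)$.'' But \eqref{theta G} only defines the unitary $U=VW^*$, where $V=(\Theta\circ j_G^\delta\xt\id)(w_G)$ and $W=(j_G^\epsilon\xt\id)(w_G)$, and this element a priori lives in $M\bigl((A\rtimes_\epsilon G)\xt\cstg\bigr)$, not in (the canonical image of) $M(A\xt\cstg)$. That $VW^*$ multiplies $j_A^\epsilon(A)\xt\cstg$ into itself --- equivalently, that it comes from a multiplier of $A\xt\cstg$, which is what the definition of $\epsilon$-cocycle presupposes --- is a substantive claim; it does not ``reduce to the covariance relations for the two crossed products and the equivariance of $\Theta$,'' and it is precisely the step that remains open for general locally compact $G$. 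The paper's proof devotes its main computation to this point: after identifying $A$ with $j_A(A)$, it invokes \lemref{fixed} to get
\[
A\xt\cstg=\bigl((A\rtimes_\delta G)\xt\cstg\bigr)^{\alpha\xt\id}
\]
(where $\alpha$ denotes the common dual action under the identification of the two crossed products), and then verifies by a direct calculation using $\rt$-equivariance of $\Theta\circ j_G^\delta$ and $j_G^\epsilon$ that $U(1\xt c)$ and $(1\xt c)U$ are fixed by $\alpha_s\xt\id$ for every $s\in G$ and $c\in\cstg$, hence lie in $A\xt\cstg$. This is where compactness of $G$ enters part (1); your proposal never addresses the membership question at all, and instead flags the cocycle-identity computation (which is the routine part, a short leg-notation calculation) as the delicate step.

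Relatedly, your closing assertion that compactness does its essential work only in part (2), through $\Theta(j_A^\delta(A))=j_B^\epsilon(B)$, mislocates the hypothesis: if your part (1) argument worked as described, with membership in $M(A\xt\cstg)$ available by fiat, you would have proved the converse of Pedersen's theorem for coactions of arbitrary amenable groups, which the paper explicitly states is open. The rest of your outline is fine: the easy direction of (1) is exactly \cite[Proposition~3.6]{three}, uniqueness of $U$ does follow from injectivity of $j_A^\epsilon\xt\id$, and your part (2) is correct and essentially mirrors the paper's argument --- you transport $\epsilon$ to $A$ along $\theta_0$ and apply part (1) there, while the paper applies Landstad duality to $(B\rtimes_\epsilon G,\what\epsilon,\Theta\circ j_G^\delta)$ to produce a coaction $\eta$ on $B$ and applies part (1) to the pair $(\epsilon,\eta)$; the two routes are interchangeable, and both correctly use \lemref{fixed} to get Pedersen's condition for free. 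But since part (2) rests on part (1), the unproved membership claim leaves the whole argument incomplete.
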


Note that in (2) we did not explicitly mention Pedersen's condition
$\Theta(j_A(A))=j_B(B)$, since that follows automatically from
\lemref{fixed} because, for example,
$j_A(A)=(A\rtimes_\delta G)^{\what\delta,j_G^\delta}$ by Landstad duality.

\begin{proof}
(1)
Three of us proved in
\cite[Proposition~3.6]{three},
(based on
\cite[Proposition~2.8 (i)]{twisted})
that for every $\epsilon$-cocycle $U$ there is an
isomorphism $\Phi_U:(A\rtimes_\delta G,\what\delta)\variso (A\rtimes_\epsilon G,\what\epsilon)$
satisfying \ref{theta iso} and \ref{theta G}.
Conversely,  
assume that we have an isomorphism
$\Theta:(A\rtimes_\delta G,\what\delta)
\variso
(A\rtimes_\epsilon G,\what\epsilon)$
taking $j_A^\delta$ to $j_A^\epsilon$.
Use this to identify
$(A\rtimes_\delta G,\what\delta)
=(A\rtimes_\epsilon G,\what\epsilon)
=(A\rtimes G,\alpha)$
and
$j_A^\delta=j_A^\epsilon=j$.
Our group $G$ is compact, and hence amenable, so $j$ is injective.
Replace $A$ by $j(A)$.
Note that $\Theta$ is now an automorphism of $A\rtimes G$,
and $\Theta|_A=\id_A=j_A^\epsilon$.

Let $\mu=\Theta\circ j_G^\delta$ and $\nu=j_G^\epsilon$,
let the associated corepresentations
(see, for example, \cite[Lemma~1.2]{twisted} and the discussion preceding it)
be denoted by
\[
V=\mu\xt\id(w_G)
\midtext{and}
W=\nu\xt\id(w_G),
\]
where $w_G\in M(C_0(G)\xt C^*(G))=C_b^\beta(G,M(C^*(G)))$
(strictly continuous norm-bounded functions)
is defined by $w_G(s)=s$,
and then put
\[
U=VW^*.
\]
We will show that $U$ is an $\epsilon$-cocycle and $\delta=\ad U\circ\epsilon$.
Note that
\begin{align*}
\delta(a)&=\ad V(a\xt 1)
\\
\epsilon(a)&=\ad W(a\xt 1).
\end{align*}
We first show that $U$ is an $\epsilon$-cocycle.
For the cocycle identity:
\begin{align*}
    \id\xt\dg(U)
    &=\id\xt\dg(VW^*)
    \\&=\id\xt\dg(V)\id\xt\dg(W)^*
    \\&=V_{12}V_{13}W_{13}^*W_{12}^*
    \\&=V_{12}W_{12}^*
    W_{12}V_{13}W_{13}^*W_{12}^*
    \\&=U_{12}\ad W_{12}(U_{13})
    \\&=U_{12}\epsilon\xt\id(U).
\end{align*}

We still need to show that $U\in M(A\xt C^*(G))$.
By \lemref{fixed} and its proof, we have
\[
A\xt C^*(G)=((A\rtimes_\delta G)\xt C^*(G))^{\alpha\xt\id},
\]
so it suffices to show that if
$c\in C^*(G)$ and $s\in G$
then
\[
(\alpha_s\xt\id)\bigl(U(1\xt c)\bigr)
=U(1\xt c),
\]
and similarly for $(1\xt c)U$.
We only show the first, since
the computation for the second is parallel:
\begin{align*}
    &(\alpha_s\xt\id)\bigl(U(1\xt c)\bigr)
    \\&\qquad=(\alpha_s\xt\id)\bigl((\mu\xt\id)(w_G)
    (\nu\xt\id)(w_G)^*\bigr)(1\xt c)
    \\&\qquad=\mu\xt\id\bigl((\rt_s\xt\id)(w_G)
    \bigr)
    \nu\xt\id\bigl((\rt_s\xt\id)(w_G)\bigr)^*
    (1\xt c)
    \\&\qquad=\mu\xt\id\bigl(w_G(1\xt s)\bigr)
    \nu\xt\id\bigl((1\xt s\inv)w_G^*\bigr)
    (1\xt c)
    \\&\qquad=\mu\xt\id(w_G)
    \nu\xt\id(w_G^*)(1\xt c)
    \\&\qquad=U(1\xt c).
\end{align*}

Finally, if $a\in A$ then
\begin{align*}
    \ad U\circ \epsilon(a)
    &=\ad VW^*\circ\ad W(a\xt 1)
    =\ad V(a\xt 1)
    =\delta(a).
\end{align*}

For the other part, it suffices to note that the above arguments imply that for any such
isomorphism $\Theta$ we have
\begin{align*}
(\Theta\circ j_G^\delta\xt\id)(w_G)
&=\mu\xt\id(w_G)
\\&=V
=VW^*W
=UW
\\&=j_A^\epsilon\xt\id(U)j_G^\epsilon\xt\id(w_G),
\end{align*}
since $U\in M(A\xt\cstg)$ and $j_A^\epsilon=\id_A$.

(2)
The forward direction is \cite[Proposition~2.8 (ii)]{twisted}.
Conversely,  
assume that we have an isomorphism
$\Theta:(A\rtimes_\delta G,\what\delta)
\variso
(A\rtimes_\epsilon G,\what\epsilon)$.
Note that, since by Landstad duality $j_A(A)$ is the fixed-point algebra of $A\rtimes_\delta G$ under the dual action $\what\delta$, and similarly for $j_B(B)$,
it follows from \lemref{fixed} that the isomorphism $\Theta$ takes
$j_A(A)$ to $j_B(B)$.
We dualize the argument of \cite[Theorem~2.1]{rigid}:
Note that $(B\rtimes_\epsilon G,\what\epsilon,j_G^\delta)$
is an equivariant action of $G$, with fixed-point algebra
\[
(B\rtimes_\epsilon G)^{\what\epsilon}
=\Theta\bigl((A\rtimes_\delta G)^{\what\delta}\bigr)
=\Theta(j_A(A))
=j_B(B),
\]
and $j_B:B\to j_B(B)$ is an isomorphism,
so by Landstad duality there are a coaction $\eta$ of $G$ on $B$ and an isomorphism
\[
\psi:(B\rtimes_\eta G,\what\eta)\variso (B\rtimes_\epsilon G,\what\epsilon)
\]
such that
\[
\psi\circ j_G^\eta=j_G^\delta
\midtext{and}
\psi\circ j_B^\eta=j_B^\epsilon.
\]
Thus by part (1) the coactions $\epsilon$ and $\eta$ are exterior equivalent.

On the other hand, we have an isomorphism
\[
\psi\inv\circ\Theta:(A\rtimes_\delta G,\what\delta)\variso (B\rtimes_\eta G,\what\eta),
\]
taking $j_G^\delta$ to $j_G^\eta$,
so again by Landstad duality the coactions $\delta$ and $\eta$ are isomorphic.
Therefore $\delta$ and $\epsilon$ are outer conjugate.
\end{proof}

\section{Category equivalence}

Let $G$ be a fixed compact group.

The \emph{outer category $\coo$ of coactions} is defined as follows:
An object is a coaction of $G$, typically denoted by $(A,\delta)$,
and a morphism in the category is a pair $(\varphi,U): (A,\delta)\to (B,\varepsilon)$,
where $U$ is an $\varepsilon$-cocycle and
$\varphi: A\to M(B)$ is a nondegenerate homomorphism
that is $\delta-\ad U\circ \varepsilon$ equivariant.
In \cite[Lemma~6.8]{three} it is shown that the category $\coo$ is well-defined.
Isomorphisms in the category are outer conjugacies of coactions.

Next, we define the
\emph{category $\AA$ of dual actions},
as in \defnref{dual},
where a morphism
$\psi: (C,\alpha)\to (D,\beta)$
is a nondegenerate 
$\alpha-\beta$ equivariant
homomorphism $\psi: C\to M(D)$.
Note that,
by equivariance,
$\psi$ automatically takes $C^\alpha$ to $M(D^\beta)$.

Suppose $(\varphi,U): (A,\delta)\to (B,\varepsilon)$ in $\coo$.
Then $U$ is an $\varepsilon$-cocycle,
giving an exterior-equivalent coaction $\zeta=\ad U\circ\varepsilon$,
and $\varphi: (A,\delta)\to (B,\zeta)$ is a nondegenerate $\delta - \zeta$ equivariant homomorphism.
Taking crossed products gives a nondegenerate $\what\delta-\what\zeta$ equivariant homomorphism
\[
\varphi\rtimes G: A\rtimes_\delta G\to B\rtimes_\zeta G
\]
taking $j_G^\delta$ to $j_G^\zeta$.

On the other hand, Pedersen's theorem
(\thmref{pedersen}, but here we only need the ``easy half'';
see \cite[Proposition 2.8]{twisted})
gives a
$\what\zeta-\what\varepsilon$ equivariant
isomorphism
\[
\Phi_U: B\rtimes_\zeta G\variso B\rtimes_\varepsilon G,
\]
and then the composition gives a nondegenerate
$\what\delta-\what\varepsilon$ equivariant homomorphism
\[
\Phi_U\circ (\varphi\rtimes G): A\rtimes_\delta G\to B\rtimes_\varepsilon G.
\]

\begin{thm}\label{category equivalence}
With the above notation, the assignments
\begin{align}
(A,\delta)&\mapsto (A\rtimes_\delta G,\what\delta)
\label{obj map}
\\
(\varphi,U)&\mapsto \Phi_U\circ (\varphi\rtimes G)
\label{mor map}
\end{align}
give a category equivalence 
$\wcpo: \coo\to \waco$.
\end{thm}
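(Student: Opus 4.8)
The plan is to show that the assignment $\wcpo$ defined by \eqref{obj map} and \eqref{mor map} is a functor that is fully faithful and essentially surjective. First I would check that $\wcpo$ genuinely lands in $\waco$: by \defnref{dual} each object $(A\rtimes_\delta G,\what\delta)$ is a dual action, and the morphism $\Phi_U\circ(\varphi\rtimes G)$ is nondegenerate and $\what\delta-\what\varepsilon$ equivariant, exactly as recorded in the discussion preceding the theorem. Next I would verify the two functoriality axioms. For identities, note that the identity morphism on $(A,\delta)$ in $\coo$ is $(\id_A,1)$, where $1$ is the trivial $\delta$-cocycle; then $\Phi_1=\id$ and $\id_A\rtimes G=\id$, so $\wcpo(\id_A,1)=\id_{A\rtimes_\delta G}$. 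For composition, given $(\varphi,U):(A,\delta)\to(B,\varepsilon)$ and $(\psi,V):(B,\varepsilon)\to(C,\eta)$, one must identify the composite cocycle and check that $\wcpo$ respects the composition law of $\coo$; this is a cocycle-bookkeeping computation, tracking how $\Phi_U$ and $\Phi_V$ interact under the crossed-product functor, and I would lean on the explicit formula \eqref{theta G} together with \cite[Lemma~6.8]{three} to match the composite against $\wcpo$ applied to the composed morphism in $\coo$.

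\textbf{Essential surjectivity.}
This is the part that falls out directly from \defnref{dual}: an object of $\waco$ is, by definition, a dual action, so it is isomorphic in $\waco$ to some $(A\rtimes_\delta G,\what\delta)$, which is $\wcpo(A,\delta)$. Hence every object of $\waco$ is (isomorphic to) one in the image, and essential surjectivity is immediate.

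\textbf{Fullness and faithfulness.}
The substance lies here. For faithfulness I would argue that a morphism $(\varphi,U)$ can be recovered from $\wcpo(\varphi,U)=\Phi_U\circ(\varphi\rtimes G)$: the equivariant homomorphism $\wcpo(\varphi,U)$ restricts on the generalized fixed-point algebras to recover $\varphi$ (using that $j_A(A)$ is the fixed-point algebra under $\what\delta$, by Landstad duality and \lemref{fixed}), and the cocycle $U$ is then pinned down by \eqref{theta G}, whose uniqueness clause in part (1) of \thmref{pedersen} guarantees $U$ is determined. For fullness I would start with an arbitrary nondegenerate $\what\delta-\what\varepsilon$ equivariant homomorphism $\Psi:A\rtimes_\delta G\to M(B\rtimes_\varepsilon G)$ and reverse-engineer a morphism in $\coo$ mapping to it. The key device is that $\Psi$ takes the fixed-point algebra $j_A(A)$ into $M(j_B(B))$ (again by \lemref{fixed}), yielding a candidate $\varphi:A\to M(B)$; the remaining task is to produce the cocycle $U$ so that $\Psi=\Phi_U\circ(\varphi\rtimes G)$, for which I would invoke the converse direction of \thmref{pedersen}(1) applied to the automorphism-type comparison of $\Psi$ against $\varphi\rtimes G$. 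The main obstacle I anticipate is the fullness argument: one must carefully disentangle $\Psi$ into its ``$\varphi$ part'' and its ``cocycle part'' and confirm these assemble into a bona fide $\coo$-morphism $(\varphi,U)$ whose equivariance matches $\delta-\ad U\circ\varepsilon$, rather than merely producing the correct image homomorphism. Here the compactness of $G$, through \lemref{fixed}, is what makes the fixed-point algebra behave well enough for this disassembly to go through cleanly.
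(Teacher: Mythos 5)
Your overall architecture (functor axioms, essential surjectivity, faithfulness, fullness) matches the paper's, and your treatment of the first three points is essentially the paper's argument: functoriality by direct computation with the canonical maps, essential surjectivity immediate from \defnref{dual}, and faithfulness by recovering $\varphi$ from $\wcpo(\varphi,U)\circ j_A^\delta=j_B^\varepsilon\circ\varphi$ and then $U$ from the formula \eqref{theta G}, using injectivity of $j_B^\varepsilon$ and of $j_B^\varepsilon\otimes\id$. However, your fullness argument --- which is the heart of the theorem and the only place the hard direction of \thmref{pedersen} is needed --- has a genuine gap. You propose to define $\varphi$ by restricting $\Psi$ to the fixed-point algebras and then to ``invoke the converse direction of \thmref{pedersen}(1) applied to the automorphism-type comparison of $\Psi$ against $\varphi\rtimes G$.'' Two problems. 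First, the expression $\varphi\rtimes G$ is not yet meaningful: to form it you must know a coaction $\zeta$ on $B$ for which $\varphi$ is $\delta-\zeta$ equivariant, and the relevant coaction is supposed to be $\ad U\circ\varepsilon$, i.e.\ it depends on the very cocycle $U$ you are trying to construct; the restriction of $\Psi$ to fixed-point algebras carries no a priori equivariance, so the plan is circular. Second, \thmref{pedersen}(1) applies only to \emph{isomorphisms} of crossed products intertwining the canonical embeddings, whereas a morphism $\Psi$ in $\waco$ is merely a nondegenerate equivariant homomorphism into a multiplier algebra, so there is no isomorphism to which the theorem could be applied.

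The missing idea, which is exactly how the paper resolves this, is to apply Landstad duality to the \emph{equivariant action} $(B\rtimes_\varepsilon G,\what\varepsilon,\psi\circ j_G^\delta)$: since $\psi$ is $\what\delta-\what\varepsilon$ equivariant and $j_G^\delta$ is $\rt-\what\delta$ equivariant, the composition $\psi\circ j_G^\delta$ is a nondegenerate $\rt-\what\varepsilon$ equivariant homomorphism. Compactness enters through \lemref{fixed}: the generalized fixed-point algebra of this equivariant action is the honest fixed-point algebra $(B\rtimes_\varepsilon G)^{\what\varepsilon}=j_B^\varepsilon(B)$, so Landstad duality produces a coaction $\zeta$ on $B$ itself and an isomorphism $\Theta\colon(B\rtimes_\zeta G,\what\zeta)\variso(B\rtimes_\varepsilon G,\what\varepsilon)$ with $\Theta\circ j_G^\zeta=\psi\circ j_G^\delta$ and $\Theta\circ j_B^\zeta=j_B^\varepsilon$. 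Now \thmref{pedersen}(1) legitimately applies to the isomorphism $\Theta$, yielding a unique $\varepsilon$-cocycle $U$ with $\zeta=\ad U\circ\varepsilon$ and $\Theta=\Phi_U$; and Landstad duality for morphisms, applied to $\Theta\inv\circ\psi$ (which intertwines $j_G^\delta$ and $j_G^\zeta$), produces a $\delta-\zeta$ equivariant $\varphi\colon A\to M(B)$ with $\Theta\inv\circ\psi=\varphi\rtimes G$, whence $\psi=\Phi_U\circ(\varphi\rtimes G)=\wcpo(\varphi,U)$. Without this detour --- first manufacturing the coaction $\zeta$ and the isomorphism $\Theta$, and only then extracting $U$ and $\varphi$ --- your proposed disassembly of $\Psi$ into a ``$\varphi$ part'' and a ``cocycle part'' cannot get started.
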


\begin{proof}
We have to show that \eqref{obj map}--\eqref{mor map} give
a functor
that is faithful, full, and essentially surjective.
It follows immediately from the definitions that the object and morphism maps \eqref{obj map}--\eqref{mor map}
are well-defined and
that \eqref{mor map}
preserves identity morphisms.
To check that $\wcpo$ preserves compositions,
given morphisms
$\xymatrix{
	(A,\delta) \ar[r]^-{(\varphi,U)} &(B,\varepsilon) \ar[r]^-{(\psi,V)} &(C,\zeta)
}$
in $\coo$,
we have (similarly to \cite[Proposition~5.8]{three})
\begin{align*}
\wcpo(\psi,V)\circ \wcpo(\varphi,U)\circ j_A^\delta
&=\bigl(\Phi_V\circ (\psi\rtimes G)\bigr)
\circ
\bigl(\Phi_U\circ (\varphi\rtimes G)\bigr)
\circ j_A^\delta
\\
&=\Phi_V\circ (\psi\rtimes G)\circ \Phi_U\circ j_B^{\ad U\circ\varepsilon}\circ \varphi
\\
&=\Phi_V\circ (\psi\rtimes G)\circ j_B^\varepsilon\circ \varphi
\\
&=\Phi_V\circ j_C^{\ad V\circ\zeta}\circ \psi\circ \varphi
\\
&=j_C^\zeta\circ \psi\circ \varphi
\\
&=\Phi_{(\psi\circ U)V}\circ j_C^{\ad (\psi\circ U)V}\circ \psi\circ\varphi
\\
&=\Phi_{(\psi\circ U)V}\circ \bigl((\psi\circ\varphi)\rtimes G\bigr)\circ j_A^\delta
\\
&=\wcpo\bigl(\psi\circ\varphi,(\psi\circ U)V\bigr)\circ j_A^\delta
\\
&=\wcpo\bigl((\psi,V)\circ (\varphi,U)\bigr)\circ j_A^\delta.
\end{align*}
Moreover 
(exactly as in \cite[Theorem~6.12]{three}),
\begin{align*}
&\bigl(\wcpo(\psi,V)\circ \wcpo(\varphi,U)\circ j_G^\delta\otimes\id\bigr)(w_G)
\\
&\quad=\Bigl(\wcpo\bigl((\psi,V)\circ (\varphi,U)\bigr)\circ j_G^\delta\otimes\id\Bigr)(w_G),
\end{align*}
which gives that
\begin{align*}
\wcpo(\psi,V)\circ \wcpo(\varphi,U)\circ j_G^\delta=\wcpo\bigl((\psi,V)\circ (\varphi,U)\bigr)\circ j_G^\delta.
\end{align*}
It follows that $\wcpo: \coo\to \waco$ is a functor.

To check that $\wcpo$ is essentially surjective,
choose an object $(C,\alpha)$ in $\waco$, so that by definition
there exists a coaction $(A,\delta)$ and an isomorphism
\[
(C,\alpha) \variso (A\rtimes_\delta G,\what\delta).
\]

Finally, we show that for objects $(A,\delta)$ and $(B,\varepsilon)$ in $\coo$,
the functor $\wcpo$ gives a bijection:
\begin{equation}\label{bij}
	\begin{split}
		\mor_{\coo}\bigl((A,\delta),(B,\varepsilon)\bigr)
		& \longleftrightarrow  \mor_{\waco}\bigl(\wcpo(A,\delta),\wcpo(B,\varepsilon)\bigr).
	\end{split}
\end{equation}

First, to see that $\wcpo$ is faithful,
suppose that we have morphisms
\[
(\varphi,U),(\rho,V): (A,\delta)\to (B,\varepsilon)
\]
in $\coo$ such that
\[
\wcpo(\varphi,U)=\wcpo(\rho,V): 
(A\rtimes_\delta G,\what\delta)\to
(B\rtimes_\varepsilon G,\what\varepsilon)
\]
in $\waco$.
By construction, we have
\begin{align*}
\wcpo(\varphi,U)\circ j_A^\delta
&=\Phi_U\circ (\varphi\rtimes G)\circ j_A^\delta
\\
&=\Phi_U\circ j_B^{\ad U\circ\varepsilon}\circ \varphi
\\
&=j_B^\varepsilon\circ \varphi,
\end{align*}
and similarly
\[
\wcpo(\rho,V)\circ j_A^\delta=j_B^\varepsilon\circ \rho,
\]
so $\varphi=\rho$ since $j_B^\varepsilon$ is injective.
Moreover,
\begin{align*}
	\bigl(\wcpo(\varphi,U)\circ j_G^\delta\otimes\id\bigr)(w_G)
	&=\bigl(\Phi_U\circ(\varphi\rtimes G)\circ j_G^\delta\otimes\id\bigr)(w_G)
	\\&=(j_B^\varepsilon\otimes\id)(U)(j_G^\varepsilon\otimes\id)(w_G),
\end{align*}
and similarly
\[
\bigl(\wcpo(\rho,V)\circ j_G^\delta\otimes\id\bigr)(w_G)
=(j_B^\varepsilon\otimes\id)(V)(j_G^\varepsilon\otimes\id)(w_G),
\]
so $(j_B^\varepsilon\otimes\id)(U)=(j_B^\varepsilon\otimes\id)(V)$ since $(j_G^\varepsilon\otimes\id)(w_G)$ is unitary.
Since $j_B^\varepsilon$ is injective, so is
\[
j_B^\varepsilon\otimes\id\colon B\otimes C^*(G)\to M\bigl((B\rtimes_\varepsilon G)\otimes C^*(G)\bigr).
\]
Thus $U=V$, and hence $(\varphi,U)=(\rho,V)$.

Suppose we are given a morphism
\[
\psi\colon (A\rtimes_\delta G,\what\delta)\to (B\rtimes_\varepsilon G,\what\varepsilon)
\]
in $\waco$.
We have that $j_A^\delta(A)=(A\rtimes_\delta G)^{\what\delta}$ and $j_B^\varepsilon(B)=(B\rtimes_\varepsilon G)^{\what\varepsilon}$.
By Landstad duality there are a coaction $(B,\zeta)$ and an isomorphism
\[
\Theta\colon (B\rtimes_\zeta G,\what\zeta)\variso (B\rtimes_\varepsilon G,\what\varepsilon)
\]
taking $j_G^\zeta$ to $\psi\circ j_G^\delta$ such that
\begin{equation}\label{p}
\Theta\circ j_B^\zeta=j_B^\varepsilon.
\end{equation}
In particular, we have a $\what\zeta-\what\varepsilon$ equivariant morphism
$\Theta\colon B\rtimes_\zeta G\to B\rtimes_\varepsilon G$
satisfying \eqref{p}, so by Pedersen's theorem (\thmref{pedersen})
there is a unique $\varepsilon$-cocycle $U$
such that $\zeta=\ad U\circ\varepsilon$ and $\Theta=\Phi_U$.

Furthermore, the map
\[
\Theta\inv\circ \psi: (A\rtimes_\delta G,\what\delta)\to (B\rtimes_\zeta G,\what\zeta)
\]
takes $j_G^\delta$ to $j_G^\zeta$,
so by Landstad duality there exists a map
\[
\varphi: (A,\delta) \to (B,\zeta)
\]
such that $\Theta\inv\circ \psi=\varphi\rtimes G$ and thus
\[
\psi=\Theta\circ (\varphi\rtimes G)=\Phi_U\circ (\varphi\rtimes G)=\wcpo(\varphi,U).
\]
Hence, the map \eqref{bij} is surjective.
\end{proof}

\begin{rem}
Concerning our definition of the functor $\wcpo$, in the discussion preceding \thmref{category equivalence} we only needed the easy half of Pedersen's theorem, which is valid for any locally compact group $G$,
so the assignments \eqref{obj map}--\eqref{mor map}
make sense for arbitrary $G$.
Now, careful inspection of the above proof reveals that the only place where we required $G$ to be compact
and not merely amenable
(note that amenability is required to ensure that $j_B^\epsilon$ is injective)
is the verification that the functor $\wcpo$
is full.
Consequently, if we only assume that $G$ is amenable
we still get an essentially surjective faithful functor $\wcpo:\CC\to \AA$.
This highlights exactly where we need the full force of Pedersen's theorem \thmref{pedersen}.
\end{rem}

Now, let $\mathcal{B}$ be the nondegenerate category of $C^*$-algebras,
that is, the category whose objects are $C^*$-algebras and morphisms are nondegenerate $^*$-homomorphisms.

We define a forgetful functor $F: \waco\to \mathcal{B}$ as usual,
by $(C,\alpha)\to C$ on objects and on morphisms by taking
$(C,\alpha)\to (D,\beta)$
to the map $C\to M(D)$.
We define the \emph{outer crossed-product functor} as the composition
\[
\wilde\wcpo:=F\circ \wcpo: \coo\to \mathcal{B}.
\]
This setting describes an inversion of $\wilde\wcpo$ in the sense of \cite[Section~4]{three} that we call \emph{outer Landstad duality for coactions}.

In \cite[Section~4]{three} we say that the inversion is \emph{good} if the image of $F$ is contained in the essential image of $\wilde\wcpo$,
and $F$ enjoys the following unique isomorphism lifting property:
For all objects $y\in\mathcal{B}$ and $x\in F^{-1}(y)$ and every isomorphism $\theta$ in $\mathcal{B}$ with domain $y$,
there exists a unique isomorphism $\eta$ in $\waco$ such that $F(\eta)=\theta$.

\begin{prop}
Outer Landstad duality for coactions is a good inversion.
\end{prop}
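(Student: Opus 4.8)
The plan is to verify the two defining conditions of a good inversion separately, leaning on the category equivalence $\wcpo\colon\coo\to\waco$ from \thmref{category equivalence} and on the fact that $F\colon\waco\to\mathcal{B}$ is a forgetful (hence isomorphism-preserving and faithful) functor.

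First I would check that the image of $F$ is contained in the essential image of $\wilde\wcpo=F\circ\wcpo$. Take a $C^*$-algebra $C$ in the image of $F$, so that there is a dual action $(C,\alpha)$ in $\waco$. Since $\wcpo$ is essentially surjective, there are a coaction $(A,\delta)$ and an isomorphism $(C,\alpha)\simeq\wcpo(A,\delta)=(A\rtimes_\delta G,\what\delta)$ in $\waco$. Applying $F$ produces a $C^*$-isomorphism $C\cong A\rtimes_\delta G=\wilde\wcpo(A,\delta)$, so $C$ lies in the essential image of $\wilde\wcpo$. This half is essentially immediate from essential surjectivity and requires no new input.

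The substantive part is the unique isomorphism lifting property, which I would establish by transport of structure. Fix a $C^*$-algebra $y$, an object $x=(y,\alpha)\in F^{-1}(y)$ (a dual action on $y$), and an isomorphism $\theta$ in $\mathcal{B}$ with domain $y$. Recalling that isomorphisms in the nondegenerate category $\mathcal{B}$ are exactly $^*$-isomorphisms, write $\theta\colon y\variso z$. I would then define $\beta_s=\theta\circ\alpha_s\circ\theta\inv$ for $s\in G$; this is the unique action of $G$ on $z$ for which $\theta$ is $\alpha-\beta$ equivariant, and continuity of $s\mapsto\beta_s$ is inherited through conjugation by $\theta$. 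Because $\theta$ implements an isomorphism $(y,\alpha)\simeq(z,\beta)$ and $(y,\alpha)$ is a dual action, transitivity of $\simeq$ shows $(z,\beta)$ is again a dual action, hence a genuine object of $\waco$; thus $\eta:=\theta\colon(y,\alpha)\to(z,\beta)$ is an isomorphism in $\waco$ with $F(\eta)=\theta$, giving existence.

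For uniqueness I would argue that any isomorphism $\eta'$ in $\waco$ with domain $x$ and $F(\eta')=\theta$ has underlying homomorphism $\theta$ and codomain of the form $(z,\beta')$, whereupon equivariance of $\theta$ forces $\beta'_s=\theta\circ\alpha_s\circ\theta\inv=\beta_s$; since a morphism in $\waco$ is determined by its underlying homomorphism, $\eta'=\eta$. I do not anticipate a genuine obstacle here, as the heavy lifting already resides in \thmref{category equivalence} and in Pedersen's theorem; the only point demanding care is confirming that isomorphisms in $\mathcal{B}$ are honest $^*$-isomorphisms $y\to z$ rather than merely nondegenerate maps into $M(z)$, since this is precisely what makes the transport-of-structure formula for $\beta$ well-defined and guarantees that $(z,\beta)$ is a dual action.
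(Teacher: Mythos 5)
Your proof is correct and follows essentially the same route as the paper's: the image condition comes straight from the definition of dual actions (equivalently, essential surjectivity of $\wcpo$), and the lifting property is proved by transporting $\alpha$ along $\theta$ to get $\beta_s=\theta\circ\alpha_s\circ\theta\inv$ and then invoking faithfulness of the forgetful functor. If anything, your uniqueness step is slightly more complete than the paper's one-line appeal to faithfulness, since you also observe that equivariance pins down the codomain $(z,\beta')=(z,\beta)$ before faithfulness can be applied.
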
	

\begin{proof}
If $C=F(C,\alpha)$
then by definition there is a coaction $(A,\delta)$ such that $C\simeq A\rtimes_\delta G$.
Thus, the image of $F$ is contained in the essential image of $\wilde\wcpo$.

Moreover, given an object $(C,\alpha)$ of $\waco$ and an isomorphism $\theta\colon C\variso D$ in $\mathcal{B}$,
we can use $\theta$ to carry the action $\alpha$
over to an action $\beta$ on $D$,
and then  $\theta$ gives an isomorphism
$\wilde\theta\colon (C,\alpha)\to (D,\beta)$
in $\waco$ covering $\theta\colon C\to D$.
The forgetful functor is faithful, so $\wilde\theta$ is unique.
Thus, the unique isomorphism lifting property is satisfied.
\end{proof}


\begin{thebibliography}{1}

\bibitem{CRST}
Toke~Meier Carlsen, Efren Ruiz, Aidan Sims, and Mark Tomforde,
  \emph{Reconstruction of groupoids and {$C^\ast$}-rigidity of dynamical
  systems}, Adv. Math. \textbf{390} (2021), Paper No. 107923, 55pp. 

\bibitem{three}
S.~Kaliszewski, Tron Omland, and John Quigg, \emph{Three versions of
  categorical crossed-product duality}, New York J. Math. \textbf{22} (2016),
  293--339. 

\bibitem{rigid}
\bysame, \emph{Rigidity theory for {$C^*$}-dynamical systems and the
  ``{P}edersen rigidity problem''}, Internat. J. Math. \textbf{29} (2018),
  no.~3, 1850016, 18pp. 

\bibitem{rigid2}
\bysame, \emph{Rigidity theory for {$C^*$}-dynamical systems and the
  ``{P}edersen rigidity problem'', {II}}, Internat. J. Math. \textbf{30}
  (2019), no.~8, 1950038, 22pp. 

\bibitem{pedersen}
Gert~K. Pedersen, \emph{Dynamical systems and crossed products}, Operator
  algebras and applications, {P}art 1 ({K}ingston, {O}nt., 1980), Proc. Sympos.
  Pure Math., vol.~38, Amer. Math. Soc., Providence, R.I., 1982, pp.~271--283.  

\bibitem{ldc}
John~C. Quigg, \emph{Landstad duality for {$C^*$}-coactions}, Math. Scand.
  \textbf{71} (1992), no.~2, 277--294. 

\bibitem{twisted}
John~C. Quigg and Iain Raeburn, \emph{Induced {$C^*$}-algebras and {L}andstad
  duality for twisted coactions}, Trans. Amer. Math. Soc. \textbf{347} (1995),
  no.~8, 2885--2915. 

\bibitem{szabo}
G\'{a}bor Szab\'{o}, \emph{On a categorical framework for classifying {$\rm
  C^*$}-dynamics up to cocycle conjugacy}, J. Funct. Anal. \textbf{280} (2021),
  no.~8, Paper No. 108927, 66pp. 

\end{thebibliography}
\providecommand{\bysame}{\leavevmode\hbox to3em{\hrulefill}\thinspace}
\providecommand{\MR}{\relax\ifhmode\unskip\space\fi MR }
\providecommand{\MRhref}[2]{%
  \href{http://www.ams.org/mathscinet-getitem?mr=#1}{#2}
}
\providecommand{\href}[2]{#2}

\end{document}